 \newtheorem{thm}{Theorem}[section]
 \newtheorem{lem}[thm]{Lemma}
\theoremstyle{definition}
 \newtheorem{defn}[thm]{Definition}
\theoremstyle{remark}
\newtheorem*{ex}{Example}
\numberwithin{equation}{section}
\begin{document}

%
%
%
%
%
%
%
%
%

\title[Structural theorems for ultradistribution semigroups]
 {Structural theorems for ultradistribution semigroups}

\author[Marko Kosti\' c]{Marko Kosti\' c}

\author{Stevan Pilipovi\' c}

\author{Daniel Velinov}


%


\begin{abstract}
We consider exponential  ultradistribution semigroups with
non--densely defined generators and  give structural theorems for
ultradistribution semigroups. Also structural theorems for
exponential ultradistribution semigroups are given.
\end{abstract}

\maketitle
\section{Introduction and preliminaries}\label{intro}

\noindent In the previous paper
\cite{kps} the first two authors have analyzed, on the basis of well developed theory,
ultradistribution semigroups through the existence of an  ultradistributional fundamental solution
to $u_t-Au=\delta$, where $A$ is the corresponding closed operator as well as through the sub-exponential estimate of the resolvent $||R(\lambda, A)||\leq Ce^{M(k|\lambda|)}$ in an appropriate domain
defined by the associated function $M$.  In Theorems 8 and 9 of \cite{kps} they have given examples based on these characterizations. In this  paper
we give complete structural characterization of
 ultradistribution semigroups with the aim of their full characterizations and connections with the corresponding Cauchy problems.

 As we mentioned, the  literature
related to   ultradistribution
semigroups is pretty reach. It is based on the
generalizations of $C_0-$semigroups,
especially  of various classes of integrated
semigroups of W. Arendt \cite{a11} and further
extensions,   and \cite{me152}
(see also  \cite{a22}, \cite{l115},
\cite{keya}, \cite{l114},
\cite{n181}) Especially, we refer to the excellent  monograph \cite{a43} and the references
therein.
Ultradistribution semigroups with densely defined
generators were  considered by J. Chazarain in \cite{cha}
(see also \cite{b42}, \cite{ci1}, \cite{keya}
and references therein) while
H. Komatsu \cite{k92}  considered ultradistribution semigroups
with non-densely defined generators as well as Laplace
hyperfunction semigroups. We also refer to R. Beals
\cite{b41}-\cite{b42} for the theory of $\omega$-ultradistribution
semigroups with the densely defined generators, to P. C. Kunstmann
\cite{ku113} and  to the monograph of I. Melnikova and A. Filinkov
\cite{me152} for ultradistribution semigroups with the non-densely
defined generators and applications to abstract Cauchy problem
(see \cite{me151}, \cite{me149}).
In \cite{kps} are analyzed ultradistribution semigroups
following the approaches of
P. C. Kunstmann \cite{ku112} and S. Wang \cite{w241},
where distributions semigroups are considered. The most recent theory
of ultradistribution semigroups is given in the monograph of M. Kostic \cite{kosticknjiga}. \\
\indent

We recall in Section 2 some of  definitions and results from
\cite{kps} related to ultradistribution semigropups.
Ultradifferentiable operators are used in order
to clarify relations between
exponentially bounded and tempered ultradistribution
semigroups and convoluted semigroups. \\
\indent In Section 3 we give a structural characterizations
for ultradistribution semigroups. The main results are
given in Theorem \ref{struc}. We give five conditions for
ultradistribution semigroups and the corresponding five
conditions for exponential ultradistribution semigroups
and we give relations between them.

\subsection{Notation from ultradistribution theory}

Here we use the same notation like in \cite{kps} and we follow
approach of H. Komatsu \cite{k91} in defining ultradistribution spaces.
%
%
%
%
If
$(M_p)$ verifies (M.1), (M.2) and (M.3)', then the
spaces of Beurling, respectively,
Roumieu ultradifferentiable functions, are
${\mathcal D}^{(M_p)}({\mathbb R})$ and
${\mathcal D}^{\{M_p\}}({\mathbb R})$
With the notation $*$ for both cases of brackets, we define
${\mathcal D}'^{*}({\mathbb R},E):= L({\mathcal D}^{*}({\mathbb R}),
E)$ as the space of continuous linear functions from ${\mathcal
D}^{*}({\mathbb R})$ into $E$; ${\mathcal D}^{*}_0({\mathbb R})$
denotes the space of elements in ${\mathcal D}^{*}({\mathbb R})$
which are supported by $[0,\infty)$ while ${\mathcal E}'^{*}_0$
denotes the space of ultradistributions whose supports are compact
subsets of $[0,\infty)$. We also use the traditional notation
${\mathcal D}'^{*}_+({\mathbb R},E)$ for the space of vector valued
ultradistributions supported by $[0,\infty).$ We refer to
\cite{k82} for the basic material related
to vector-valued ultradistribution spaces.

%
%
Spaces of tempered ultradistributions of Beurling and Roumieu
type are given in \cite{pilip} (see also \cite{sp94}) as duals of the test
spaces ${\mathcal S}^{(M_p)}({\mathbb R})$ and
${\mathcal S}^{\{M_p\}}({\mathbb R})$, respectively.

Recall (\cite{k91}), an entire function of the form
$P(\lambda)=\sum_{p=0}^{\infty}a_p\lambda^p,\;\ \lambda \in {\mathbb
C},$ is of $(M_p)$-class, respectively, of $\{M_p\}$-class, ( i.e.,
an ultrapolynomial of the respective class) if there exist $k>0$ and
$C>0$, resp., for every $k>0$ there exists a constant $C>0,$ such
that $|a_p|\leq Ck^p/M_{p},\ p\in{\mathbb N}.$ The corresponding
ultradifferential operator $P(d/dt)= \sum_{p=0}^{\infty}a_p
d^p/dt^p$ is of $(M_p)$-class, respectively
of $\{M_p\}$-class.
The composition and the sum
of ultradifferential operators of the Beurling, resp., the Roumieu
class, are ultradifferential operators of the Beurling, resp., the
Roumieu class.

The following assertion is well known in the theory of
ultradistributions (cf. \cite{k91} and ~\cite[Theorem
4.7]{k92}).

{\it Let $T\in{\mathcal D}_+^{'*}(\mathbb{R},E)$. Then for every
$a>0$ there exist an ultradifferential operator of $(M_p)$-class,
formally of the form
\begin{equation} \label{str1}
P_L(d/dt)=\prod_{p=1}^\infty\Bigl(1+\frac{L^2}{m_p^{2}}d^2/dt^2\Bigr)
=\sum_{p=0}^\infty a_pd^p/dt^p,
\end{equation}
where $L>0$ is some constant, resp., of $\{M_p\}$-class, formally of
the form
\begin{equation} \label{str2}
P_{L_p}(d/dt)=\prod_{p=1}^\infty\Bigl(1+\frac{L_p^2}{m_p^{2}}d^2/dt^2\Bigr)=
\sum_{p=0}^\infty a_pd^p/dt^p,
\end{equation}
where $ (L_p)_p$ is a sequence decreasing to $0,$ and a continuous
function $f:(-a,a)\rightarrow E$ such that
$$ T=P_L(-id/dt)f,\;
\mbox{ on }
{\mathcal D}^{(M_p)}((-a,a)),\mbox{ in } (M_p)-\mbox{case, resp.},$$
$$
T=P_{L_p}(-id/dt)f,\; \mbox{ on } {\mathcal D}^{\{M_p\}}((-a,a)),
\mbox{ in } \{M_p\}-\mbox{case}.
$$
Due to \cite[Theorem 2]{pilip}, we have the
following representation theorems for tempered ultradistributions in
the case when (M.1), (M.2) and (M.3) are valid.

{\it Let $T\in{\mathcal S}_+^{'*}(\mathbb{R},E)$. Then  there exist
an ultradifferential operator of $(M_p)$-class, $P_L(d/dt),\ L>0,$
formally of the form $(\ref{str1}),$ resp., of $\{M_p\}$-class,
$P_{L_p}(d/dt),$ $ (L_p)_p$ is a sequence tending to zero, formally
of the form $(\ref{str2}),$ and a continuous function
$f:\mathbb{R}\rightarrow E$ with the properties supp$f\subset
(-a,\infty),$ for some $a>0,$ $||f(t)||\leq Ae^{M(k|t|)},\ t\in
\mathbb{R}$, for some $k>0$ and $A>0$, resp., for every $k>0$ and a
corresponding $A>0$, and that $ T=P_L(-id/dt)f\;
\mbox{ in } (M_p)$-case on ${\mathcal S}^{(M_p)}(\mathbb{R}),$
resp., $ T=P_{L_p}(-id/dt)f\; \mbox{ in } \{M_p\}$-case on
${\mathcal S}^{\{M_p\}}(\mathbb{R}).$ }


\section{Ultradistribution semigroups}

\subsection{Some results from ultra distribution theory}

We will consider ultradistribution semigroups in the framework
of exponential ultradistributions which we define through tempered
ultradistributions.We assume here that $(M_p)$ satisfies (M.1), (M.2)
and (M.3). The purpose of (M.3) is again the use of ~\cite[Theorem
4.8]{k82}.

\begin{defn}\label{prostor}
Let $a\geq 0.$ Then $
{\mathcal{SE}}^{*}_{a}({\mathbb R}):=\{\phi \in C^\infty({\mathbb
R}) :  e^{a\cdot}\phi\in{\mathcal S}^{*}(\mathbb R)\}.$

The convergence in this space is given by
$$\phi_n\rightarrow 0 \mbox{ in }
{\mathcal{SE}}^{*}_{a}({\mathbb R}) \mbox{ iff }
e^{a \cdot}\phi_n\rightarrow 0 \mbox{ in } {\mathcal
S}^{*}({\mathbb R}).$$

We denote by $\mathcal{SE}'^{*}_{a}(\mathbb{R},E)$ the space
of all
continuous linear mappings from $\mathcal{SE}^{*}_{a}(\mathbb{R})$
into $E$ equipped with the strong topology.
\end{defn}

We have
$$
 F\in {\mathcal{SE}}'^{*}_{a}(\mathbb{R},E) \mbox{
iff } e^{-a\cdot}F\in {\mathcal{S}}'^{*}(\mathbb{R},E).
$$

\begin{thm}\label{novo}
Let $G \in \mathcal{SE}'^{*}_{a}(\mathbb{R},E).$ Then there exists
an ultrapolynomial $P$ of $*$-class and a function $g\in
C({\mathbb R},E)$ with the property that there exist $k>0$ and
$C>0$, resp., for every $k>0$ there exists an appropriate $C_k>0$
such that
$$
e^{-ax}||g(x)||\leq C_ke^{M(k|x|)}, \;x\in \mathbb{R}\; \mbox{ and }\;
 G=P(d/dt)g.
$$
\end{thm}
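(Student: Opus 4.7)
The plan is to reduce to the known structural representation for tempered ultradistributions by multiplying with $e^{-a\cdot}$, and then to conjugate the resulting ultradifferential operator back through $e^{a\cdot}$.

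First I would use the equivalence recorded just before the statement: $G\in \mathcal{SE}'^{*}_{a}(\mathbb{R},E)$ iff $T:=e^{-a\cdot}G \in \mathcal{S}'^{*}(\mathbb{R},E)$. Applying the structural theorem for $E$-valued tempered ultradistributions (the unrestricted analogue of the representation quoted after (\ref{str2}), from \cite{pilip}) to $T$ furnishes an ultrapolynomial $P$ of $*$-class and a continuous function $f:\mathbb{R}\to E$ with
$$
\|f(x)\|\le A e^{M(k|x|)},\qquad x\in\mathbb{R},
$$
for some (respectively, for every) $k>0$, and such that $T = P(-id/dt)f$.

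Next I set $g(x):=e^{ax}f(x)$; then $g$ is continuous and $e^{-ax}\|g(x)\|\le A e^{M(k|x|)}$, which is precisely the bound required in the conclusion. Using the elementary intertwining identity $(d/dt-a)^{p}(e^{ax}f)=e^{ax}f^{(p)}$ and summing against the coefficients of $P(-i\,\cdot\,)$, one obtains, formally and then rigorously in the ultradifferential-operator sense,
$$
G=e^{a\cdot}T=e^{a\cdot}P(-id/dt)f = \widetilde{P}(d/dt)g,\qquad \widetilde{P}(\lambda):=P(-i\lambda+ia).
$$

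The main obstacle is to confirm that $\widetilde{P}$ is itself an ultrapolynomial of $*$-class. I would invoke the entire-function characterization of ultrapolynomials: a function $P$ is of $(M_p)$-class iff $|P(\lambda)|\le C e^{M(k|\lambda|)}$ for some $k,C>0$, and of $\{M_p\}$-class iff this holds for every $k>0$ with a corresponding $C_k$. Under the affine substitution $\lambda\mapsto -i\lambda+ia$ this becomes $|\widetilde{P}(\lambda)|\le C e^{M(k(|\lambda|+a))}$, and the standard quasi-subadditivity consequence of (M.2), $M(s+t)\le M(Hs)+M(Ht)+\mathrm{const}$, then yields $|\widetilde{P}(\lambda)|\le C'e^{M(k'|\lambda|)}$ with $k'$ suitably enlarged, preserving both the Beurling and the Roumieu character. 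Combining this with the continuous $g$ and its exponential bound delivers the desired representation $G=\widetilde{P}(d/dt)g$.
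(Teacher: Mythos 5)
Your argument is correct and its skeleton coincides with the paper's: both pass to $e^{-a\cdot}G\in\mathcal{S}'^{*}(\mathbb{R},E)$, apply the structural theorem for vector-valued tempered ultradistributions to get $e^{-a\cdot}G=P(-id/dt)f$ with $\|f(x)\|\le Ae^{M(k|x|)}$, set $g=e^{a\cdot}f$, and then must show that conjugating the ultradifferential operator by $e^{a\cdot}$ yields again an operator of the same class. Where you genuinely differ is in how that last, crucial point is established. The paper expands $e^{ax}P(d/dt)f$ by the Leibniz/binomial formula and estimates the shifted coefficients $b_j=\sum_{k}{k+j \choose j}a^{k}a_{k+j}$ by hand, using ${j+k \choose j}\le 2^{k+1}k^{k}e^{j}$, the bound $|a_{k+j}|\le Ch^{k+j}/M_{k+j}$ and $M_jk^k\le C_\varepsilon\varepsilon^{k+j}M_{k+j}$, concluding $|b_j|\le CL^{j}/M_j$. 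You instead note that the conjugated operator is $\widetilde{P}(d/dt)$ with $\widetilde{P}(\lambda)=P(-i\lambda+ia)$ and invoke Komatsu's entire-function characterization of ultrapolynomials via the global bound $|P(\lambda)|\le Ce^{M(k|\lambda|)}$ (resp.\ for every $k>0$), which is stable under the affine substitution since $M(k(|\lambda|+a))\le M(2k|\lambda|)+M(2ka)$; in fact monotonicity and nonnegativity of $M$ already give this, so the (M.2)-type inequality you cite is not even needed. Your route is shorter and cleaner, at the price of importing the (standard, (M.1)-based) growth characterization, whereas the paper's computation is self-contained and exhibits the coefficient bound explicitly. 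One point to keep in view: identifying $e^{a\cdot}P(-id/dt)f$ with $\widetilde{P}(d/dt)g$ involves rearranging a double series when paired with test functions; this is justified by the absolute convergence that the coefficient bounds provide, and the paper's own Leibniz step glosses the same rearrangement at the same level of detail, so this is not a gap relative to the published argument.
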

\begin{proof} Let us prove the assertion in the Beurling case.
Since \\ $e^{-a\cdot}G\in \mathcal{S}'^{(M_p)}(\mathbb{R},E),$ one can
use the same arguments as in \cite{pilip} in order to see that there
exist an ultrapolynomial $P$ of $(M_p)$-class and a function $g_1\in
C({\mathbb R},E)$ with the property that there exist $k>0$ and
$C_k>0$ such that
$$ ||g_1(x)||\leq C_ke^{M(k|x|)}\mbox{ and that }
 G=e^{ax}P(d/dt)g_1(x).
$$
Put $g(x)=e^{ax}g_1(x),\ x\in {\mathbb R}.$ By Leibnitz formula, we
have
$$e^{ax}P(d/dt)g_1(x)=\sum_{j=0}^{\infty}
\Bigl(\sum_{k=0}^{\infty}{j+k \choose j}(-1)^ka^ka_{k+j}
\Bigr)(e^{ax}g_1(x))^{(j)}
$$
and we will prove the assertion if we show that $b_{j}\leq
C\frac{L^{j}}{M_{j}},\ j\in \mathbb{N}_0, $ for some $C, \ L>0,$
where $b_j=\sum_{k=0}^{\infty}{k+j \choose j}a^ka_{k+j},\;
j\in\mathbb{N}_0.$

We will use the following inequality,
${j+k \choose j}\leq 2^{k+1}k^ke^j,\; j,k \in \mathbb{N}.$

This follows from
$${j+k \choose j}\leq (j+k)^k\leq 2^kj^k+2^kk^k\leq
2^k(k^ke^j+k^k)=2^kk^k(e^j+1),\; j,k \in \mathbb{N},
$$
where we use
$j^k\leq k^ke^j,\; j,k \in \mathbb{N}.$ This is clear for $k\geq j$.
Let us prove this for $k<j$. Put $k=\varepsilon j$ and note,
if $\varepsilon\in (0,1)$, then $\varepsilon \ln \varepsilon\in(-1,0)$
and
$$\varepsilon j\ln j\leq \varepsilon j \ln j+\varepsilon j\ln \varepsilon
+j.$$
 This implies $j^k\leq k^ke^j, k<j$. Now we will
 estimate $b_j$ using the estimate
$$|a_{k+j}|\leq C \frac{h^{k+j}}{M_{k+j}}
\mbox{ for some } \; h>0,\; C>0$$
and that for every $\varepsilon>0$ there
exists $C_\varepsilon>0$ such that
$M_jk^k\leq C_\varepsilon \varepsilon^{k+j}M_{k+j}.$\\
\indent
With this we have
$$M_j|b_j|\leq
2 \sum_{k=0}^{\infty}\frac{h^{k+j}M_j2^kk^ke^ja^k}{M_{k+j}}
\leq 2C(he)^j\sum_{k=0}^{\infty}
\frac{(2ha)^k M_jk^k}{M_{k+j}}, j\in \mathbb{N}
$$
and choosing $\varepsilon$ enough small,
we obtain the convergence of the last series.
This implies that there exist $L>0$ and $C>0$ such that
$|b_j|\leq CL^j/M_j,\; j\in \mathbb{N}.$

\end{proof}

We need the following estimations of ultrapolynomials:

\begin{lem} \label{ocena}
\begin{itemize}
\item[(a)] Let $P_L$ be of the form $(\ref{str1})$. Then there exist
$C,\ C_1>0,\ L_1,\ L_2>0$ such that
$$e^{2M(L|\zeta|)}\leq |P_L(\zeta)|\leq Ce^{M(L_1|\zeta|)}\mbox{
if }\, |Im\zeta| < \frac{|Re \zeta|}{2}+ \frac{1}{L}$$ and $|a_p|\leq
C_1 L_2^p/M_p,\; p\in {\mathbb N}_0.$

\item[(b)] Let $(L_p)_p$ be a sequence which strictly decreases to zero
and $P_{L_p}$ be defined by $(\ref{str2})$. Then there exists
$C>0$ such that, for every  $k>0,$ there exists  $C_k>0,$
such that
$$ |P_{L_p}(\zeta)|\leq C_ke^{M(k|\zeta|)}\mbox{
if }\,
|Im\zeta| < \frac{|Re \zeta|}{2}+ \frac{1}{L_1},
$$
and (with  another $C_k,$ for given $k>0$) $|a_p|\leq C_k
k^p/M_p,\; p\in {{\mathbb N}_{0}}. $ Moreover, there exists a
subordinate function $\varepsilon(\rho),\;\rho\geq 0,$ such that
$$e^{2M(\varepsilon(|\zeta|))}\leq |P_{L_p}(\zeta)|\mbox{
if }\,
|Im\zeta| <\frac{|Re \zeta|}{2}+ \frac{1}{L}.$$
\end{itemize}
\end{lem}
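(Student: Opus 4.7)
The plan is to reduce both parts of the lemma to two classical Komatsu-type facts for real arguments,
\[
e^{M(s)} = \sup_{p\ge 0} \frac{s^p}{M_p}, \qquad M_0^2\, e^{2M(s)} \le \prod_{p=1}^{\infty}\bigl(1 + s^2/m_p^2\bigr) \le C\, e^{2M(Hs)},
\]
valid for $s \ge 0$ under (M.1)--(M.3), and then to transfer the real-variable bounds to the strip by elementary manipulations with $|1 + L^2\zeta^2/m_p^2|$. In both cases the coefficient estimates follow from the established upper bound on $|P(\zeta)|$ via Cauchy's inequality on $|\zeta| = r$ combined with the dual identity $\inf_{r>0} r^{-p} e^{M(hr)} \asymp h^p/M_p$.

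For the upper bound in (a), the pointwise inequality $|1 + L^2\zeta^2/m_p^2| \le 1 + L^2|\zeta|^2/m_p^2$ reduces to the real-variable Komatsu estimate, and (M.2) in the form $2M(\rho) \le M(H\rho) + \text{const}$ converts the exponent $2M$ into $M$, yielding $|P_L(\zeta)| \le C e^{M(L_1|\zeta|)}$ with $L_1$ a fixed multiple of $L$. The lower bound is the more delicate point. Writing $\zeta = x+iy$, the strip condition forces $\operatorname{Re}(\zeta^2) = x^2-y^2 \ge c|\zeta|^2$ for some $c>0$ once $|\zeta|$ is sufficiently large. The identity
\[
|1 + L^2\zeta^2/m_p^2|^2 = 1 + \tfrac{2L^2(x^2-y^2)}{m_p^2} + \tfrac{L^4|\zeta|^4}{m_p^4} \ge \Bigl(1 + \tfrac{\sqrt{c}\,L^2|\zeta|^2}{m_p^2}\Bigr)^2,
\]
multiplied over $p$ and combined with the lower part of the real-variable Komatsu estimate applied at $s = \sqrt{c}\,L|\zeta|$, yields $|P_L(\zeta)| \ge M_0\, e^{2M(\sqrt{c}\,L|\zeta|)}$; the factor $\sqrt{c}$ is absorbed into the constant $L$ appearing in the stated form. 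The bounded portion of the strip, where $P_L$ has no zeros, is handled separately by a compactness argument.

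Part (b) proceeds by the same scheme with the extra input that $L_p\downarrow 0$. For the upper bound, given any $k>0$ choose $N$ so that $L_p < k$ for $p>N$; the tail $\prod_{p>N}(1 + L_p^2|\zeta|^2/m_p^2)$ is dominated by $\prod_{p=1}^{\infty}(1 + k^2|\zeta|^2/m_p^2)\le C e^{M(k'|\zeta|)}$ via part (a), and the finite head grows only polynomially and is swallowed into $C_k$. For the lower bound with a subordinate function, introduce the auxiliary sequence $N_p := M_p / \prod_{j=1}^{p} L_j$, which strictly dominates $(M_p)$ because $L_j \to 0$. Running the strip argument of part (a) with $(N_p)$ in place of $(M_p)$ produces $|P_{L_p}(\zeta)| \ge e^{2M_{N_p}(c|\zeta|)}$, where $M_{N_p}$ is the associated function of $(N_p)$. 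The subordinate $\varepsilon$ is then extracted from the standard comparison of associated functions: since $N_p/M_p \to \infty$, one obtains $M(\varepsilon(\rho)) \le M_{N_p}(c\rho)$ with $\varepsilon(\rho)/\rho \to 0$ by taking Young duals of the ratios. Making this construction of $\varepsilon$ explicit is the main technical step we expect to require care; the rest of the argument is bookkeeping with (M.1)--(M.3).
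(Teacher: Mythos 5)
Your proposal is essentially sound and in fact covers much more than the paper does: the paper's proof consists only of the lower bound in (a), obtained by bounding each factor via $|1+c\zeta^{2}|\ge|c\zeta|^{2}$ when $\operatorname{Re}\zeta^{2}\ge 0$ (the remaining factors being $\ge 1$ and discarded, which is what preserves the constant $L$ inside $M(\cdot)$), with the bounded part of the strip dismissed in a sentence; the upper bounds, the coefficient estimates and all of part (b) are left to the standard literature. Your route differs in the lower bound: you bound \emph{every} factor below by $1+cL^{2}|\zeta|^{2}/m_{p}^{2}$ (using $\operatorname{Re}\zeta^{2}\ge c|\zeta|^{2}$ on the strip for large $|\zeta|$) and then invoke the two--sided real--variable Komatsu estimate, while the coefficient bounds via Cauchy's inequality on $|\zeta|=r$ together with $\inf_{r}r^{-p}e^{M(hr)}\asymp h^{p}/M_{p}$, and the Roumieu upper bound by splitting head and tail of the product, are clean and correct. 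Two points need repair. First, the displayed inequality $\bigl|1+L^{2}\zeta^{2}/m_{p}^{2}\bigr|^{2}\ge\bigl(1+\sqrt{c}\,L^{2}|\zeta|^{2}/m_{p}^{2}\bigr)^{2}$ is false as written for small $L^{2}|\zeta|^{2}/m_{p}^{2}$ (since $c<\sqrt{c}$ for $c<1$); the correct factor is $c$, i.e. $\bigl|1+L^{2}\zeta^{2}/m_{p}^{2}\bigr|\ge 1+cL^{2}|\zeta|^{2}/m_{p}^{2}$, which is exactly what your subsequent evaluation at $s=\sqrt{c}\,L|\zeta|$ uses, so this is a self-correcting slip. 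Second, the resulting exponent is $2M(\sqrt{c}\,L|\zeta|)$ with $c$ pinned by the strip geometry (roughly $3/5$), so the constant cannot literally be ``absorbed into $L$'' as the statement demands; note, however, that the paper's own argument ends with $e^{2M(|\zeta|)}$ rather than $e^{2M(L|\zeta|)}$, and every later use (Lemma 2.4, the inverse Laplace transform of $1/P_{L}(-i\lambda)$ in Theorem 3.2) only needs the bound with \emph{some} positive multiple of $|\zeta|$, so this discrepancy is harmless but should be stated honestly.

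For part (b), your construction is the right one (the product $P_{L_{p}}$ is the canonical product of the sequence $N_{p}=M_{p}/\prod_{j\le p}L_{j}$, whose ``$m_{p}$'' is $m_{p}/L_{p}$), but the justification of the subordinate function needs more than $N_{p}/M_{p}\to\infty$: the fact actually required is that $\bigl(M_{p}/N_{p}\bigr)^{1/p}=\bigl(\prod_{j\le p}L_{j}\bigr)^{1/p}\to 0$, which does hold because $L_{j}\downarrow 0$ (the geometric means of a null sequence tend to zero), and this condition is equivalent, by Komatsu's comparison lemma for associated functions (or by the direct argument $N(\rho)\le M(\ell\rho)+C_{\ell}$ for every $\ell>0$, followed by setting $\varepsilon:=M^{-1}\circ N$), to the existence of a subordinate $\varepsilon$ with $M(\varepsilon(\rho))\le N(\rho)+O(1)$. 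With that made explicit, and with the bounded portion of the strip handled as you indicate (the zeros $\pm im_{p}/L$, resp. $\pm im_{p}/L_{p}$, lie outside it under the usual normalization), your argument is complete and, unlike the paper's, actually proves all assertions of the lemma.
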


\begin{proof}
We will prove only the part
$$e^{2M(L|\zeta|)}\leq |P_L(\zeta)|\mbox{
if }\, |Im\zeta| < \frac{|Re \zeta|}{2}+ \frac{1}{L}.
$$
Note that for any $c>0$, the inequality $x^2-y^2\geq 0$
($\zeta=x+iy$) implies $|1+c\zeta^2| \geq |c\zeta|^2.$ Also,
$|1+c\zeta^2|>|c\zeta|^2,$ for all sufficiently small $|\zeta|.$
Thus, by the simple calculation we have that
$$ \bigl|1+\frac{L^2\zeta^2}{m_p^2}\bigr|\geq
\frac{L^2}{m_p^2}|\zeta|^2\;\mbox{
if }\,
 |Im\zeta| < \frac{|Re \zeta|}{2}+ \frac{1}{L}.
 $$
This implies
$$ |P_L(\zeta)|=\bigl|\prod_{p=1}^\infty
\Bigl(1+\frac{L^2}{m_p^{2}}\zeta^2\bigr)\bigr|\geq
\prod_{p=1}^\infty\Bigl(\frac{L^2}{m_p^{2}}|\zeta|^2\Bigr)\geq
e^{2M(|\zeta|)}\;\mbox{
if }\,  |Im\zeta| < \frac{|Re \zeta|}{2}+ \frac{1}{L}.
$$
\end{proof}

\begin{lem}\label{izo}
Let $P_L(d/dt)$ and $P_{L_p}(d/dt)$ be of the form $(\ref{str1})$
and $(\ref{str2})$, respectively. The mappings
$$P_L(id/dt): {\mathcal S}^{(M_p)}({\mathbb R})
\rightarrow {\mathcal S}^{(M_p)}({\mathbb R}),\;\;
\phi\mapsto P_L(id/dt)\phi,$$
$$P_{L_p}(id/dt): {\mathcal S}^{\{M_p\}}({\mathbb R})
\rightarrow {\mathcal S}^{\{M_p\}}({\mathbb R}),\;\;
\phi\mapsto P_{L_p}(id/dt)\phi,
$$
are continuous linear bijections.
\end{lem}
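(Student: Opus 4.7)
The plan is to conjugate by the Fourier transform. Under (M.1), (M.2), (M.3) both $\mathcal{S}^{(M_p)}(\mathbb{R})$ and $\mathcal{S}^{\{M_p\}}(\mathbb{R})$ are invariant under $\mathcal{F}$, and a direct computation yields $\mathcal{F}[P_L(id/dt)\phi](\xi)=P_L(-\xi)\hat\phi(\xi)$ (analogously for $P_{L_p}$). Hence it suffices to prove that multiplication by $P_L(-\,\cdot\,)$, respectively $P_{L_p}(-\,\cdot\,)$, is a continuous linear bijection of $\mathcal{S}^{*}(\mathbb{R})$, and then transport back via $\mathcal{F}^{-1}$.

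Continuity is routine. Using the coefficient estimates in Lemma \ref{ocena} together with the defining seminorms of $\mathcal{S}^{*}$ (involving $\sup_{x\in\mathbb R}e^{M(h|x|)}h^{n}|\phi^{(n)}(x)|/M_{n}$), one shows termwise that $\sum_{p}a_{p}i^{p}\phi^{(p)}$ converges in $\mathcal{S}^{(M_p)}$, with the controlling seminorm only shifted by multiplicative constants; the convergence of the relevant sums is guaranteed by (M.2). The Roumieu version uses the bound $|a_{p}|\leq C_{k}k^{p}/M_{p}$ valid for every $k>0$, allowing one to move between seminorms in the inductive limit.

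Injectivity is immediate on the Fourier side: $P_L(-\xi)=\prod_{p=1}^{\infty}(1+L^{2}\xi^{2}/m_p^{2})\geq 1$ and likewise $P_{L_p}(-\xi)\geq 1$ for every real $\xi$, so $P_L(-\xi)\hat\phi(\xi)\equiv 0$ forces $\hat\phi\equiv 0$, hence $\phi=0$. For surjectivity I would, given $\psi\in\mathcal{S}^{*}$, define $g(\xi):=\hat\psi(\xi)/P_L(-\xi)$; smoothness is trivial, and the task is to show $g\in\mathcal{S}^{*}(\mathbb R)$. Here the two-sided estimate in Lemma \ref{ocena}(a), $e^{2M(L|\zeta|)}\leq |P_L(\zeta)|\leq Ce^{M(L_1|\zeta|)}$ in a strip around $\mathbb{R}$, combined with Cauchy's formula on small circles inside this strip, yields bounds of the form $|(1/P_L)^{(n)}(\xi)|\leq C(L')^{n}e^{-M(L|\xi|)}/M_{n}$. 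A Leibniz expansion then transfers these to $\mathcal{S}^{*}$-seminorm estimates for $g=\hat\psi\cdot(1/P_L)$, and $\phi:=\mathcal{F}^{-1}g$ is the desired preimage of $\psi$. Continuity of the inverse follows from the open mapping theorem, as $\mathcal{S}^{(M_p)}$ is a Fréchet space and $\mathcal{S}^{\{M_p\}}$ is a (DFS)-space.

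The main technical obstacle is the Roumieu case, where the coefficients of $P_{L_p}$ depend on a sequence $(L_p)$ decreasing to zero and only the weaker subordinate-function lower bound $|P_{L_p}(\zeta)|\geq e^{2M(\varepsilon(|\zeta|))}$ from Lemma \ref{ocena}(b) is available. Producing uniform derivative estimates on $1/P_{L_p}$ that respect the inductive-limit seminorms of $\mathcal{S}^{\{M_p\}}(\mathbb R)$ demands careful use of the subordinate function $\varepsilon$ together with (M.2); this is the delicate step, and the rest of the argument is then a mechanical replication of the Beurling case.
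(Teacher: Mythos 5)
Your proposal is correct and follows essentially the same route as the paper: conjugation by the Fourier transform, reduction to multiplication and division by $P_L$ on the Fourier side, and Cauchy's formula on circles of fixed radius inside the strip from Lemma \ref{ocena} to estimate the derivatives of $1/P_L$ and conclude $\hat\phi/P_L\in\mathcal{S}^{*}(\mathbb{R})$. The paper likewise treats only the Beurling case in detail, so your additional remarks on the inverse's continuity and the Roumieu subtleties are compatible refinements rather than a different method.
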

\begin{proof}
We will prove the lemma in the Beurling case. Let
$\phi\in{\mathcal S}^{(M_p)}({\mathbb R}).$ Then $${\mathcal
F}(P_L(id/dt)\phi)(\xi)
=P_L(-\xi)\hat{\phi}(\xi)=P_L(\xi)\hat{\phi}(\xi),\;
\xi\in{\mathbb R}.$$ One can prove by standard arguments that
$P_L(\xi)\hat{\phi}\in{\mathcal S}^{(M_p)}({\mathbb R}).$ We have
to prove that $\hat{\phi}/P_L(\xi)\in{\mathcal S}^{(M_p)}({\mathbb
R}).$

Notice that there exists $r>0$ such that, for every $\xi\in {\mathbb
R},$ the circle $k_\xi(r)$, with the center $\xi$ and the radius
$r,$ is contained in the domain $|Im \zeta|<1/C$ where the estimates
of Lemma \ref{ocena} are satisfied. By Cauchy's formula, with
suitable constants, it follows
$$|(P_L^{-1})^{(n)}(\xi)|\leq C \frac{n!}{r^n}\sup
\{|P_L^{-1}(\xi+re^{i\theta})| : \theta\in[0,2\pi]\}\leq
$$
$$\leq  C \frac{n!}{r^n}
e^{M(L(|\xi|+r))}\leq  C_1 \frac{n!}{r^n} e^{M((L+1)|\xi|)},\;
\xi\in \mathbb{R},\; n\in {{\mathbb N}_{0}}.
$$
Now it is easy to prove that for every $h>0$,
$$\sup\Bigl\{\frac{h^n|(\hat{\phi}/P_L)^{(n)}(\xi)
|e^{M(h|\xi|)}}{M_n} :  \xi\in {\mathbb R},
\;n\in {{\mathbb N}_{0}}\Bigr\} < \infty$$ which is equivalent with
$\hat{\phi}/P_L\in{\mathcal S}^{(M_p)}({\mathbb R}).$
\end{proof}

\subsection{Structural theorems}

Let $A$ be a closed operator and $K$ be a locally integrable
function on $[0,\tau )$, $0<\tau \leq \infty $, and let
$\Theta(t):=\int_0^t K(s)\, ds$, $0\leq t\leq\tau$. Recall
(see \cite{kosticknjiga}, \cite{mn}, \cite{l114} for example), if there exists a
strongly continuous operator family $(S_K(t))_{t\in [0,\tau)}$ such
that $S_{K}(t)C=CS_{K}(t),$ $S_{K} (t)A \subset
AS_{K}(t),$ $ \int_{0}^{t} S_{K}(s)x\, ds\in D(A)$, for $t\in[0,\tau)$, $\;x\in E$
and
$$
A\int_{0}^{t} S_{K}(s)x\, ds = S_{K}(t)x-\Theta (t)Cx, \;x\in
E,
$$ 
then $(S_K(t))_{t\in [0,\tau)}$ is called a (local) $K$-convoluted
$C$-semigroup having $A$ as a subgenerator.

If $\tau=\infty$, then it is said that
$(S_{K}(t))_{t\geq 0}$ is an exponentially bounded $K$-convoluted $C-$semigroup
generated by $A$ if, additionally, there exist $M>0$ and $\omega \in
{\mathbb R}$ such that $||S_{K}(t)||\leq Me^{\omega t},\ t\geq 0.$
 $(S_{K}(t))_{t\in [0,\tau)}$ is called non-degenerate,
if the assumption $S_{K}(t)x=0,$ for all $t\in [0,\tau),$ implies
$x=0.$

We recall  from \cite{kps} the definitions of
L-ultradistribution semigroups and
ultradistribution semigroups
(following \cite{ku112} and \cite{w241})
and define exponential ultradistribution semigroups.
\begin{defn} \label{d6.1}
Let $G\in {\mathcal D}'^{*}_+({\mathbb R},L(E)).$ It is an
exponential L-ultradistribution semigroup of $*$-class
if the following conditions $(U.1)$--$(U.5)$ hold:
\newline
$ (U.1)\;\;  G(\phi*\psi) = G(\phi)G(\psi),\ \phi,\ \psi \in
{\mathcal D}^{*}_0({\mathbb R});\; $
\newline
$ (U.2)\;\;    {\mathcal N} (G) := \bigcap_{\phi\in {\mathcal
D}^{*}_0({\mathbb R})} N (G(\phi)) = \{0\};\;$
 \newline
 $ (U.3)\;\;   {\mathcal
R}(G) :=
 \bigcup_{\phi \in {\mathcal D}^{*}_0({\mathbb R})}
R(G(\phi)) \mbox{ is dense in } E;$
\newline
$ (U.4)$  For every $x \in {\mathcal R}(G)$ there exists a function
$u \in C([0,\infty),E)$ satisfying
$$
u(0) = x \mbox{ and } G(\phi)x =
\int_{0}^{\infty}\phi(t)u(t)\, dt,\ \phi \in {\mathcal
D}^{*}({\mathbb R}).$$
\newline
$ (U.5)$ There exists $a\geq0$ such that
$G\in{\mathcal{SE}}'^{*}_{a}(\mathbb{R},L(E));$
\newline

Recall, $f\ast_{0}g(t):=
\int_{0}^{t}f(t-s)g(s)\, ds,\; t\in\mathbb{R}.$
If $G\in {\mathcal D}'^{*}_+({\mathbb R},L(E))$ satisfies
\newline
$ (U.6)\;\;   G(\phi*_0\psi) = G (\phi)G(\psi),\,
\mbox{for}\,\, \,  \phi,\ \psi \in
{\mathcal  D}^{*}({\mathbb R})\;$, and $(U.5)$, then it is a exponential
pre-ultradistribution semigroup, in short, pre-(EUDSG) of
$*$-class.

 If $(U.6)$, $(U.5)$ and $(U.2)$
are fulfilled for $G$, then $G$ is
an exponential ultradistribution
semigroup of $*$-class, in short, (EUDSG). A
pre-(EUDSG) $G$ it is said that is dense if
additionally $(U.3)$ is satisfied.

If only $(U.6)$ holds then we call $G$
pre-ultradistribution semigroup or pre-(UDSG).

If $(U.6)$ and $(U.2)$ holds, $G$ is
ultradistribution semigroup, in short (UDSG),
and if additionally $(U.3)$ holds then $G$
is dense ultradistribution semigroup.
\end{defn}
If $G\in {\mathcal D}'^{*}_+({\mathbb R},L(E))$, then the condition:

\noindent
$ (U.2)' \;\;\;\; \mbox{supp}G(\cdot)x \nsubseteq \{0\},\; \;
\mbox{for
every } x\in E \setminus \{0\},$
\noindent is equivalent to $(U.2).$

 Let $D$ be another Banach space and $P \in {\mathcal
D}'^{*}_{+}({\mathbb R},L(D,E))$. Then, as in the case of distribution semigroups,
 $G \in {\mathcal
D}'^{*}_{+}({\mathbb R},L(E,D))$ is an ultradistribution
fundamental solution for $P$
if
$$
P \ast G = \delta \otimes I_{E} \mbox{   and   } G \ast P = \delta
\otimes I_{D}.
$$
If additionally  $G\in {\mathcal{SE}}'^{*}_{a}(\mathbb{R},L(E,[D(A)])),$ holds
for some $a\geq 0$, then it is said that $G$
is exponential ultradistribution fundamental solution for $P$.

As  in the case of distributions, an ultradistribution fundamental
solution for $P \in {\mathcal D}'^{*}_{+}({\mathbb R},L(D,E))$ is
uniquely determined.\\
\indent

In the sequel, we will use the phrase \textquotedblleft $G$ is an
ultradistribution fundamental solution for $A$'' if $G$ is an
ultradistribution fundamental solution for $P:=\delta' \otimes
I_{D(A)}-\delta \otimes A \in {\mathcal
D}'^{*}_{+}(\mathbb{R},L([D(A)],E)).$

Following the investigation of H. Komatsu \cite{k92}, in the
framework of Denjoy-Karleman-Komatsu theory of ultradistributions
and P. C. Kunstmann \cite{ku113}, in the theory of
$\omega$-ultradistributions, we define the next regions:

$\Omega^{(M_p)} :=\{\lambda \in {\mathbb C} :  Re \lambda \geq
M(k|\lambda|)+ C\},\; \mbox{ for some } k>0,\; C>0, $ resp.,

$\Omega^{\{M_p\}} :=\{\lambda \in {\mathbb C} : Re \lambda \geq
M(k|\lambda|)+ C_k\},\; \mbox{ for every } k>0$ and a corresponding
$C_k>0.$ By $\Omega^{\ast}$ is denoted either $\Omega^{(M_p)}$ or
$\Omega^{ \{M_p\} }.$

In Theorem \ref{struc} which is to follow, in the case of tempered
ultradistribution semigroups (and similarly in the case of
exponentially bounded ultradistribution semigroups), we use
~\cite[Theorem 3.5.14]{kosticknjiga}, where the
inverse Laplace transform is performed on the straight line
connecting points $\bar{a}-i\infty$ and $\bar{a}+i\infty,$ where
$\bar{a}>0.$ With a suitable choice of $L$, resp., $(L_p)_p$, we
have that this line lies in the domain $|Im (i\zeta)|<\frac{|Re
(i\zeta)|}{2}+\frac{1}{L},$ resp., $|Im (i\zeta)|<\frac{|Re
(i\zeta)|}{2}+\frac{1}{L_1},$ where we have the quoted estimates for
$P_L(-i\lambda),$ resp., $P_{L_p}(-i\lambda).$ Let us explain this
in the Beurling case with
more details. Choose any $L\in (0,\frac{1}{\bar{a}})$ and put \\
$K(t)=\frac{1}{2\pi i}\int_{\bar{a}-i\infty}^{\bar{a}+i\infty}\frac{e^{\lambda
t}}{P_{L}(-i\lambda)}d\lambda,\ t \geq 0.$ Then $K$ is an
exponentially bounded, continuous function defined on $[0,\infty)$
and we shall simply write \\ $K={\mathcal
L}^{-1}(\frac{1}{P_L(-i\lambda)}).$

Now, we will give the structural characterizations for
(UDSG)'s and exponential (UDSG)'s.
Some of these characterizations are proved in
\cite{k92},\cite{kosticknjiga}, \cite{mn},
\cite{ku113}, and \cite{me152}.
We will indicate this in Theorem \ref{struc}.  \vspace{0.2cm}

First, we list the statements: \vspace{0.2cm}

\begin{itemize}
\item[(a)] $A$ generates a (UDSG) of $*$-class $G$.
\item[(a)'] $A$ generates a  (EUDSG) of $*$-class $G$.
\item[(b)] $A$ generates a  (UDSG) of $*$-class $G$ such that, for
every $a>0,$ $G$ is of the form $G =P^{a}_L(-id/dt)S^{a}_K$ on
${\mathcal D}^{(M_p)}((-\infty, a))$ in $(M_p)$-case, (resp.,
$G=P^{a}_{L_p}(-id/dt)S^{a}_K$ on ${\mathcal D}^{\{M_p\}}((-\infty,
a))$ in $\{M_p\}$-case), where $S^{a}_K:(-\infty,a)\rightarrow
L(E,[D(A)])$ is continuous, $S^{a}_K(t)=0,\; t\leq 0.$
\item[(b)'] $A$ generates a (EUDSG) of $*$-class $G$  so that
$G$ is of the form $G=P_L(-id/dt)S_K$ on
${\mathcal{SE}}^{(M_p)}_{a}({\mathbb R})$ in $(M_p)$-case, (resp.,
$G=P_{L_p}(-id/dt)S_K$ in $\{M_p\}$-case), where $S_K: {\mathbb
R}\rightarrow L(E,[D(A)])$ is continuous, $S_K(t)=0,\; t\leq 0$ and
$e^{-at}||S_k(t)||\leq Ae^{M(k|t|)},$ for some $k>0$ and $A>0$, resp., for
every $k>0$ and  corresponding $A>0,$ $t\in \mathbb{R}.$
\item[(c)] For every $a>0$, $A$ is the generator  of a local
non-degenerate $K_{a}$-convoluted semigroup
$(S^{a}_{K_{a}}(t))_{t\in [0,a)},$ where $K_{a}={\mathcal
L}^{-1}(\frac{1}{P^{a}_L(-i\lambda)})$ in $(M_p)$-case, resp., $
K_{a}={\mathcal L}^{-1}(\frac{1}{P^{a}_{L_p}(-i\lambda)})$ in
$\{M_p\}$-case and $P^{a}_{L},$ resp., $P^{a}_{L_p},$ is an
ultradifferential operator of $*$-class such that for $0<a<b$ the
restriction of $P^{b}_{L}S^{b}_{K},$ resp., $P^{b}_{L_p}S^{b}_{K},$
on ${\mathcal D}^*((-\infty,a))$ is equal to $P^{a}_{L}S^{a}_{K},$
resp., $P^{a}_{L_p}S^{a}_{K}.$
\item[(c)']  $A$ is the generator of a global, exponentially bounded
non-degenerate $K$-convoluted semigroup $(S_{K}(t))_{t\geq 0},$
where $K={\mathcal L}^{-1}(\frac{1}{P_L(-i\lambda)})$ in
$(M_p)$-case, resp., $ K={\mathcal
L}^{-1}(\frac{1}{P_{L_p}(-i\lambda)})$ in $\{M_p\}$-case.
\item[(d)] There exists an ultradistribution fundamental solution of $\ast$-class for
$A$, denoted by $G,$ with the property ${\mathcal N}(G)=\{0\}.$
\item[(d)'] There exists an exponential ultradistribution fundamental solution of $\ast$-class
$G$ for $A$, with the property ${\mathcal N}(G)=\{0\}.$
\item[(e)] $\rho(A)\supset \Omega^*$ and
$$
||R(\lambda : A)||\leq Ce^{M(k|\lambda|)},\mbox{   } \lambda\in \Omega^{(M_p)},
$$
for some $k>0$ and $ C>0$ in $(M_p)$-case, resp.,
$$
||R(\lambda : A)||\leq C_ke^{M(k|\lambda|)},\mbox{   } \lambda \in
\Omega^{\{M_p\}},
$$
for every $k>0$ and a corresponding $C_k>0$ in $\{M_p\}$-case.
\item[(e)']   $\rho(A)\supset \{\lambda \in \mathbb{C} : Re \lambda>a\}$ and
$$
||R(\lambda : A)||\leq Ce^{M(k|\lambda|)},\mbox{   } Re \lambda>a,
$$
for some $a, k>0$ and $ C>0$ in $(M_p)$-case, resp.,
$$
||R(\lambda : A)||\leq C_ke^{M(k|\lambda|)},\mbox{   } Re \lambda>a,
$$
for every $k>0$ and a corresponding $a, C_k>0$ in $\{M_p\}$-case.
\end{itemize}


\begin{thm} \label{struc}
(a) $\Leftrightarrow$ (d); (a)' $\Leftrightarrow$ (d)'; (c)
$\Rightarrow$ (d); (c)' $\Rightarrow$ (d)'; (d) $\Rightarrow$ (e);
(d)' $\Rightarrow$ (e)'; if $(M_{p})$ additionally satisfies
$(M.3)$, then $(a)'$ $\Rightarrow$ (c)'.
\end{thm}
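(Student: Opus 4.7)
The plan is to prove the various implications by translating between the convolution/semigroup identities, the fundamental-solution equation, the $K$-convoluted semigroup equation, and the resolvent equation. For the equivalences (a)$\Leftrightarrow$(d) and (a)'$\Leftrightarrow$(d)' I would argue directly: given $G$ satisfying (U.6), one rewrites $G \ast (\delta' \otimes I_{D(A)} - \delta \otimes A) = \delta \otimes I_E$ by using the defining relation for the generator $A$ (which is read off from $G$ via its action on $\delta'$-type test functions) together with the semigroup identity; the non-degeneracy (U.2) matches exactly with ${\mathcal N}(G) = \{0\}$. Conversely, given a fundamental solution with ${\mathcal N}(G) = \{0\}$, convolving the two defining equations and re-associating $\ast_0$ yields $G(\phi \ast_0 \psi) = G(\phi) G(\psi)$. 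In the exponential setting (a)'$\Leftrightarrow$(d)' the identical argument applies, with (U.5) built into both sides.

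For (c)$\Rightarrow$(d) and (c)'$\Rightarrow$(d)', the idea is to set $G := P^{a}_L(-id/dt) S^{a}_{K_a}$ (and analogously in the Roumieu and the globally bounded cases), where the choice $K_a = {\mathcal L}^{-1}(1/P^{a}_L(-i\lambda))$ forces $P^{a}_L(-id/dt) K_a = \delta$. The defining equation of the $K$-convoluted semigroup then translates, after applying $P^{a}_L(-id/dt)$ on both sides, into the fundamental-solution equation for $A$; the compatibility of the restrictions for varying $a$ ensures that $G$ is well defined as a single ultradistribution on $\mathbb{R}$, and in case (c)' the bound $\|S_K(t)\| \leq M e^{\omega t}$ places $G$ into ${\mathcal{SE}}'^{*}_{a}(\mathbb{R}, L(E,[D(A)]))$ for suitable $a$.

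The implications (d)$\Rightarrow$(e) and (d)'$\Rightarrow$(e)' follow from the Laplace transform: once $G$ is a fundamental solution, the structural representation (b)/(b)' together with $G = P(-id/dt)S_K$ gives $\hat{G}(\lambda) = P(-i\lambda) \hat{S}_K(\lambda)$ wherever the Laplace transform converges, and the fundamental-solution equation forces $\hat{G}(\lambda) = R(\lambda, A)$. The exponential bound on $\|S_K(t)\|$ supplied by (U.5) (or, in the non-exponential case, the local analogue via the representation of $G \in {\mathcal D}'^{*}_+(\mathbb{R}, L(E))$) gives polynomial growth of $\|\hat{S}_K(\lambda)\|$ in $\Omega^{\ast}$, while Lemma~\ref{ocena} supplies the upper bound $|P(-i\lambda)| \leq C e^{M(L_1|\lambda|)}$ on the same region; combining these yields the resolvent estimates of (e) and (e)' exactly.

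The main obstacle is the final implication (a)'$\Rightarrow$(c)' under (M.3). Starting from a (EUDSG) $G$, Theorem~\ref{novo} supplies an ultrapolynomial $P$ and a continuous function $g$ with $e^{-ax}\|g(x)\| \leq C_k e^{M(k|x|)}$ such that $G = P(d/dt) g$. After adjusting $P$ so that it has the factorized form (\ref{str1}) or (\ref{str2}) (which is where Lemma~\ref{ocena} and the ultradifferential operator calculus are used), the Laplace inversion $K := {\mathcal L}^{-1}(1/P(-i\lambda))$ is defined by the contour integral described before the statement of the theorem, and the candidate convoluted semigroup is $S_K(t) := g(t)$ for $t \geq 0$. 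The delicate step is to verify the integral identity $A \int_0^t S_K(s) x \, ds = S_K(t) x - \Theta(t) x$ together with non-degeneracy; I would extract this from (U.6) by testing against approximate identities and transferring the action of $P(-id/dt)$ via Lemma~\ref{izo}. The essential use of the stronger condition (M.3), rather than only (M.3)', is that the inversion contour can be placed inside the strip where the lower estimate $|P(-i\lambda)| \geq e^{2M(L|\lambda|)}$ of Lemma~\ref{ocena} holds, so that $K$ exists as a continuous exponentially bounded function on $[0,\infty)$; without (M.3) this contour deformation, and hence the identification of $S_K$ as a genuine $K$-convoluted semigroup, breaks down.
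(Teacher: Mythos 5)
Your outline for (a)$'\Rightarrow$(c)$'$ and (c)$\Rightarrow$(d) runs parallel to the paper (structure theorem for ${\mathcal{SE}}'^{*}_{a}$, $K={\mathcal L}^{-1}(1/P_L(-i\lambda))$, transfer of $P_L(\pm id/dt)$ via Lemma~\ref{izo}, and applying $P^{a}_{L}(-id/dt)$ to the convoluted-semigroup identity), but two of the implications you claim are not actually proved by your sketch. First, the direction (d)$\Rightarrow$(a): saying that one ``convolves the two defining equations and re-associates $\ast_0$'' is not an argument. The point of $(U.6)$ is that $G(\phi\ast_0\psi)=G(\phi)G(\psi)$ must hold for \emph{all} $\phi,\psi\in{\mathcal D}^{*}({\mathbb R})$, not only for test functions supported in $[0,\infty)$, and $\ast_0$ is not the convolution that appears in $P\ast G=\delta\otimes I_E$, $G\ast P=\delta\otimes I_{D}$; operator composition $G(\phi)G(\psi)$ does not re-associate formally with these identities. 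The genuine content here is either the argument the paper sketches (closability of $A$, the identity $G(-\phi')x=G(\phi)y$ for $(x,y)$ in the closure, identification of $[D(A)]$ with $[D(\tilde A)]$ where $\tilde A$ generates $G$, together with the results of \cite{mn} and \cite[Theorem 2(c)]{kps}), or the Kunstmann--Wang type computation in which, for fixed $\psi$ and $x$, one shows that $u(\phi):=G(\phi\ast_0\psi)x-G(\phi)G(\psi)x$ is annihilated by $P$, is supported in $[0,\infty)$, and hence vanishes by uniqueness of the fundamental solution. None of this is present in your proposal.

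Second, (d)$\Rightarrow$(e) cannot be obtained by the Laplace-transform argument you describe: in the non-exponential case $G$ is merely an element of ${\mathcal D}'^{*}_{+}({\mathbb R},L(E,[D(A)]))$ with no growth control, so ``$\hat G(\lambda)=R(\lambda,A)$ wherever the Laplace transform converges'' is vacuous --- there is no Laplace transform of $G$ to speak of. The classical proof (which the paper simply cites from \cite{me152}) uses the \emph{local} representation $G=P_L(-id/dt)f$ on $(-\infty,a)$ together with a cutoff: one tests $G$ against $\chi(t)e^{-\lambda t}$, obtains $(\lambda-A)G_{\chi}(\lambda)=I+E_a(\lambda)$ with an error estimated by $Ce^{M(k|\lambda|)-a\,\mathrm{Re}\lambda}$, and inverts by a Neumann series; it is exactly this mechanism that produces the logarithmic region $\Omega^{*}$ rather than a half-plane, and it is absent from your sketch. (Even in the exponential case the paper must insert a cutoff $g$ with $g=1$ on $[0,\infty)$, since $e^{-\lambda t}$ alone is not in ${\mathcal{SE}}^{*}_{a}({\mathbb R})$; your version skips this.) A smaller point: the role of $(M.3)$ is not the contour placement for the lower bound in Lemma~\ref{ocena} (that is available already under $(M.3)'$), but the validity of the structure theorem for tempered/exponential ultradistributions (Theorem~\ref{novo}, \cite[Theorem 2]{pilip}) and of the vector-valued kernel-type results from \cite{k82} on which the representation of $G$ rests.
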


\begin{proof}

(a) $\Leftrightarrow$ (d):
This equivalence is proved in \cite{mn},
when ${\mathcal N} (G)\neq\{0\}$.
The statement (a) $\Rightarrow$ (d)
is direct consequence of ~\cite[Theorem 2 (c)]{kps}.
We give here the sketch of the proof of the opposite direction. Let
$G\in{\mathcal D}_{+}^{*'}({\mathbb R},L(E,[D(A)]))$
be an ultradistributional fundamental solution
of $*$-class for $A$. By the direct calculation we have
that $A$ is closable operator.

Let $\tilde{A}$ generates $G$.
If $(x,y)$ belongs to the closure of $A$, then there
exists a sequence $(x_n,y_n)_n$ in $A$ such that
$(x_n,y_n)\rightarrow(x,y)$, when $n\rightarrow\infty$,
in $E\times E$. Let $\phi\in{\mathcal D}_
0^{*}({\mathbb R})$ be fixed. For $\varphi\in
{\mathcal D}_0^{*}({\mathbb R})$ we have
$$\|G(\varphi)(G(-\phi')x-G(\phi)y)\|=$$ $$=
\|G(\varphi)[G(\phi')(x_n-x)-G(\phi')x_n+G(\phi)
(y_n-y)-G(\phi)y_n]\|=$$
$$=\|G(\varphi)[G(\phi')(x_n-x)+G(\phi)(y_n-y)]\|
\leq (\|G(\varphi*_0\phi')\|+\|G(\varphi*_0\phi\|)/ k\, ,$$
for $k\in{\mathbb N}$. So it follows $G(-\phi')x=G(\phi)y$
for all $\phi\in{\mathcal D}_0^{*}({\mathbb R})$.
Since $G$ is a ultradistribution fundamental
solution of $*$-class for $A$ we have $A\subset\tilde{A}$.
It implies that ${\mathcal D}_+^{'*}({\mathbb R},
[\overline{D({A})}])$ is an isomorphic to a
subspace of ${\mathcal D}_+^{'*}({\mathbb R},
[D(\tilde{A})])$. From the first part of the
theorem we have that $G$ is a fundamental
ultradistribution solution for $P:=\delta'
\otimes Id_{D[\tilde{A}]}-\delta\otimes{\tilde A}$.
So $G^{*}$ is an isomorphism from ${\mathcal D}_
+^{'*}({\mathbb R}, E)$ onto ${\mathcal D}_+^{'*}
({\mathbb R}, [D({A})])$ and onto
${\mathcal D}_+^{'*}({\mathbb R}, [D(\tilde{A})])$
which implies that ${\mathcal D}_+^{'*}({\mathbb R},
[D({A})])$=${\mathcal D}_+^{'*}({\mathbb R},
[D(\tilde{A})])$, so $[D(A)]=[D({\tilde A})]$.

The statement (a)' $\Leftrightarrow$ (d)' can be proved similarly
using that $G$ can be extended continuously
on ${\mathcal{ES}}^{*}({\mathbb R})$, \cite{kps}. \\

The proof of (d) $\Rightarrow$ (e) is given in \cite{me152}.

(d)' $\Rightarrow$ (e)'\cite{kosticknjiga} : We will
give a proof for Beurling case. The Roumeiu case is
quite similar. Let $G$ be a exponential fundamental ultradistribution
solution of $(M_p)$-class for $A$, i.e $G$ is a
fundamental ultradistribution solution and
$G\in {\mathcal{SE}}'^{(M_p)}_{\omega}(\mathbb{R},L(E))$ for $\omega\geq 0$.
Let $s>0$. We define a function $g\in{\mathcal E}^{(M_p)}
({\mathbb R})$ such that $g(t)=0$ for $t<-s$ and $g(t)=1$
for $t\geq0$. The definition of $\tilde{G}(\lambda):=
G(g(t)e^{-\lambda t}):=G(e^{-\omega t}(g(t)
e^{(\omega-\lambda)t}))$ have meaning since the
function $t\mapsto g(t)e^{(\omega-\lambda)t}$,
when $t\in{\mathbb R}$ and for all $\lambda\in{\mathbb C}$
such that $\mbox{Re}\lambda>\omega$, is in
${\mathcal S}^{(M_p)}({\mathbb R})$. Because
$G$ is a fundamental ultradistribution
solution for $A-\omega I$, for $\varphi
\in{\mathcal D}^{(M_p)}({\mathbb R})$, $x\in E$
we have that, $$(A-\omega I)G(e^{-\omega t}\varphi)x=
G(-e^{-\omega t}\varphi')x-\varphi(0)x\, .$$ Using that
${\mathcal D}^{(M_p)}({\mathbb R})$ is dense in
${\mathcal S}^{(M_p)}({\mathbb R})$, we get that the
previous equation holds for all ${\mathcal S}^{(M_p)}
({\mathbb R})$. Let we put $\varphi(t)=g(t)
e^{(\omega-\lambda)t}\in{\mathcal S}^{(M_p)}({\mathbb R})$.
Then supp$ G\subseteq[0,\infty)$ and we obtain:
$$A{\tilde G}(\lambda)x=AG(e^{-\lambda t}\varphi)x=
\lambda\tilde{G}(\lambda)x-\varphi(0)x, \quad \mbox{Re}
\lambda>\omega\, .$$ From this equation,
$(\lambda I -A)\tilde{G}(\lambda)x=x$, $x\in E$,
$\mbox{Re}\lambda>\omega$. ${\tilde G}(\lambda)
A\subseteq A{\tilde G}(\lambda)$ holds for
$\mbox{Re}\lambda>\omega$ we have ${\tilde G}(\lambda)
(\lambda I -A)x=x$, for $x\in D(A)$ and
$\mbox{Re}\lambda>\omega$. We put $\omega =a$
so we have proved the first part of the statement.
From the discussion above, it is clear that
$R(\lambda:A)x={\tilde G}(\lambda)x$, for $x\in E$,
$\mbox{Re}\lambda>a$. Using $(M.1)$ we obtain that
$$\|R(\lambda:A)\|=\|{\tilde G}(\lambda)\|=
\|G(e^{-\omega t}(g(t)e^{(\omega-\lambda)t}))\|\leq $$
$$\leq C'' \sup\limits_{t\in K}\frac{(g(t)
e^{(\omega-\lambda)t})^{(p)}}{M_p h^p}\leq C''
\sup\limits_{t\in K}\sum\limits_{j=0}^{p}C_j^p
\frac{g^{(p-j)}(t)\cdot
(e^{(\omega-\lambda)t})^{(j)}}{M_ph^p}\leq$$
$$\leq C'' \sup\limits_{t\in K}\sum
\limits_{j=0}^{p}C_j^p\frac{g^{(p-j)}(t)}
{M_{p-j}h^{p-j}}\cdot\frac{(\omega-\lambda)^j
e^{(\omega-\lambda)t}}{M_jh^j}\leq$$ $$\leq C'
\sup\limits_{t\in K}\sum\limits_{j=0}^{p}C_j^p
\frac{(\omega-\lambda)^je^{(\omega-\lambda)t}}{M_jh^j}\leq Ce^{M(k|\lambda|)}\, .$$

 (a)' $\Rightarrow$ (c)':

We will prove this assertion  in the
Beurling case by the use of already mentioned structural theorem for
elements of ${\mathcal{SE}}'^{(M_p)}_{a}(L(E)):$
$$G(\phi)=
\langle\phi, P_L(-id/dt)S(t))\rangle,\; \phi\in{\mathcal
S}^{(M_p)}({\mathbb R}),
$$
where, for an appropriate $k>0,$
$$
e^{-at}||S(t)||\leq e^{M(k|\xi|)},\ t\in {\mathbb R}.
$$
Fix an $x\in E.$ By Theorem ~\cite[Theorem 2 (c)]{kps},
$$
AG(\phi)x= -\langle \phi',P_L(-id/dt)S(t)x \rangle-\phi(0)x,\mbox{
for all }\phi \in {\mathcal S}^{(M_p)}({\mathbb R}).
$$
Since $1=P_L(-id/dt){\mathcal L}^{-1}(1/P_{L}(-i\cdot))$ in the
sense of ultradistributions, we have, for every $\phi \in {\mathcal
S}^{(M_p)}(\mathbb R),$
$$0=\langle \phi'(t),(P_L(-id/dt)A\int\limits_0^t S(s)x\, ds
 -P_L(-id/dt)S(t)x
$$
$$
+\int \limits^{t}_{0}{\mathcal L}^{-1}(1/P_{L}(-i\cdot))(s)x\, ds ) \rangle
$$
$$=\langle P_L(id/dt)\phi'(t),(A\int\limits_0^tS(s)x\, ds
-S(t)x +\int\limits^{t}_{0}{\mathcal L}^{-1}(1/P_{L}(-i\cdot))(s)x\, ds)
\rangle.
$$
Assume that $\psi\in{\mathcal D}(\mathbb R)$ and $\phi\in{\mathcal
S}^{(M_p)}({\mathbb R})$ so that $\psi=P_L(id/dt)\phi$ (cf. Lemma
\ref{izo}). This implies
\begin{equation}\label{pre1}
A\int\limits_0^tS(s)x\, ds -S(t)x +\int\limits^{t}_{0}{\mathcal
L}^{-1}(1/P_{L}(-i\cdot))(s)x\, ds=const,
\end{equation}
in the sense of Beurling ultradistributions on
$(0,\infty)$. We obtain that $const=0$ by putting $x=0$ in
(\ref{pre1}). Since the left side of (\ref{pre1}) is continuous on
${\mathbb R},$ we have
$$
A\int_0^tS(s)x\, ds =S(t)x-\Theta(t)x=0,\; \mbox{ where }\;
\Theta(t)=\int^{t}_{0}{\mathcal
L}^{-1}(1/P_{L}(-i\cdot))(s)\, ds,$$

for all $t\geq 0.$ This completes the proof of (a)' $\Rightarrow$
(c)'.

Let us show (c) $\Rightarrow$ (d) in the Beurling case.
The proof of (c)' $\Rightarrow$ (d)' is  similar. Define $G $
on ${\mathcal
 D}^{(M_{p})}((-\infty,a)),$ for all $a>0,$ by
$$
G:=P^{a}_{L}(-id/dt)S^{a}_{K_{a}},\; \mbox{ where }
P^a_L=\sum_{p=0}^{\infty} a_p(d/dt)^p.
$$
Then $G$ is a continuous
linear mapping from ${\mathcal
 D}^{(M_{p})}({\mathbb R})$ into $L(E)$ which commutes with
$A$. Moreover, supp$G \subset [0,\infty).$ Let $\phi\in {\mathcal
D}^{(M_p)}((-\infty,a))$ and $x\in E.$ We have,
$$
G(-\phi')x-AG(\phi)x=-\sum_{p \geq 0}a_{p}(-i)^{p}\int
\limits^{a}_{0} \phi^{(p+1)}(s)S^{a}_{K_{a}}(s)x\, ds
$$
$$- \sum_{p \geq 0}a_{p}(-i)^{p}\int \limits^{a}_{0}
\phi^{(p)}(s)AS^{a}_{K_{a}}(s)x\, ds=-\sum_{p \geq 0}a_{p}(-i)^{p}\int
\limits^{a}_{0} \phi^{(p+1)}(s)S^{a}_{K_{a}}(s)x\, ds
$$
$$
+\sum_{p \geq 0}a_{p}(-i)^{p}\int \limits^{a}_{0}
\phi^{(p+1)}(s)(S^{a}_{K_{a}}(s)x-\Theta_{a}(s)x)\, ds=
$$
$$
=\sum_{p \geq 0}a_{p}(-i)^{p}\int \limits^{a}_{0}
\phi^{(p)}(s)K_{a}(s)x\, ds=\phi(0)x.
$$
Hence, $G \in {\mathcal D}'^{(M_p)}(\mathbb{R},L(E,[D(A)]))$ is an
ultradistribution fundamental solution for $A$. Clearly, ${\mathcal
N}(G)=\{0\}.$
\end{proof}




\begin{thebibliography}{1}
\bibitem{a11} {W. Arendt},  \textit{Vector-valued Laplace transforms and Cauchy problems.}
Israel J. Math. \textbf{59} (1987), 327--352.

\bibitem{a22} {W. Arendt, O. El-Mennaoui\and V. Keyantuo},  \textit{Local integrated
semigroups: evolution with jumps of regularity.} J. Math. Anal.
Appl. \textbf{186} (1994), 572--595.

\bibitem{a43} {W. Arendt, C. J. K. Batty, M. Hieber \and F. Neubrander},
\textit{Vector-valued Laplace Transforms and Cauchy Problems.}
Birkh\" auser Verlag, 2001.



\bibitem{b41} {R. Beals},  \textit{On the abstract Cauchy problem.} J. Funct. Anal.
\textbf{10} (1972), 281--299.

\bibitem{b42} {R. Beals},  \textit{Semigroups and abstract Gevrey spaces.} J. Funct. Anal.
\textbf{10} (1972), 300-308.



\bibitem{cha} {J. Chazarain},  \textit{Probl\' emes de Cauchy abstraites et applications \' a
quelques probl\' emes mixtes.} J. Funct. Anal. \textbf{7} (1971),
386--446.

\bibitem{ci1} {I. Cior\u anescu},  \textit{Beurling spaces of class $(M\sb{p})$ and
ultradistribution semi-groups.} Bull. Sci. Math. \textbf{102}
(1978), 167--192.









\bibitem{l115} {M. Hieber},
\textit{Integrated semigroups and differential
operators on $L^{p} $ spaces.} Math. Ann, \textbf{29} (1991), 1- 16.


\bibitem{keya} {V. Keyantuo}, \textit{Integrated semigroups and related partial differential
equations.} J. Math. Anal. Appl. \textbf{212} (1997), 135--153.




\bibitem{k91} {H. Komatsu},  \textit{Ultradistributions, I. Structure theorems and a
characterization.} J. Fac. Sci. Univ. Tokyo Sect. IA Math.
\textbf{20} (1973), 25--105.

\bibitem{k82} {H. Komatsu},  \textit{Ultradistributions, III. Vector valued
ultradistributions the theory of kernels.} J. Fac. Sci. Univ. Tokyo
Sect. IA Math. \textbf{29} (1982), 653--718.

\bibitem{k92} {H. Komatsu},  \textit{Operational calculus and semi-groups of operators.}
Functional Analysis and Related topics (Kioto), Springer, Berlin,
213-234, 1991.

\bibitem{ko98} {M. Kosti\' c},  \textit{C-Distribution semigroups.} Studia
Math. \textbf{185} (2008), 201--217.

\bibitem{kosticknjiga}  {M. Kosti\' c},
\textit{Generalized semigroups and cosine functions.} Mathematical Institute,
Belgrade, 2011.


\bibitem{mn} {M. Kosti\' c \and S. Pilipovi\' c},  \textit{Global convoluted semigroups.}
Math. Nachr., \textbf{280}, No. 15 (2007), 1727--1743.

\bibitem{kps} {M. Kosti\' c \and S. Pilipovi\' c},
\textit{Ultradistribution semigroups.} Siberian Math. J.,
\textbf{53}, No. 2 (2012), 232-242.

\bibitem{ku101} {P. C. Kunstmann},
\textit{Stationary dense operators and generation of
non-dense distribution semigroups.} J. Operator Theory \textbf{37}
(1997), 111--120.


\bibitem{ku112} {P. C. Kunstmann},  \textit{Distribution semigroups and abstract Cauchy
problems.} Trans. Amer. Math. Soc. \textbf{351} (1999), 837--856.

\bibitem{ku113} {P. C. Kunstmann},  \textit{Banach space valued ultradistributions and
applications to abstract Cauchy problems.} preprint.



\bibitem{l114} {M. Li, F. Huang \and Q. Zheng},  \textit{Local integrated $C$-semigroups.}
Studia Math. \textbf{145} (2001), 265--280.



\bibitem{me151}{I. V. Melnikova} \textit{The method of integrated semigroups for Cauchy problems in Banach spaces.}
Siberian Math. J., \textbf{40}, No. 1 (1999), 119--129.


\bibitem{me149}{I. V. Melnikova} \textit{Regularized solutions to Cauchy problems well posed in the extended sense.}
Integral Transforms Spec. Funct. , \textbf{17} No. 2--3 (2006), 185--191.

\bibitem{me152} {I. V. Melnikova \and A. I. Filinkov},  \textit{Abstract Cauchy Problems:
Three Approaches.} Chapman \& Hall/CRC, 2001.

\bibitem{n181} {F. Neubrander},  \textit{Integrated semigroups and their applications to the
abstract Cauchy problem.} Pacific J. Math. \textbf{135} (1988),
111--155.




\bibitem{pilip} {S. Pilipovi\' c},  \textit{Tempered ultradistributions.} Boll.
Un. Mat. Ital. \textbf{7} (1988), 235-251.

\bibitem{sp94} {S. Pilipovi\' c},  \textit{Characterizations of bounded sets in spaces of
ultradistributions.} Proc. Amer. Math. Soc. \textbf{20} (1994),
1191-1206.



\bibitem{w241} {S. Wang},  \textit{Quasi-distribution semigroups and integrated semigroups.}
J. Funct. Anal. \textbf{146} (1997), 352--381.

(

\end{thebibliography}
\end{document}